\documentclass[10pt]{article}
\usepackage[utf8]{inputenc}
\usepackage{amsfonts,amsmath,amssymb,amsthm}
\usepackage{graphicx}
\usepackage{bbm}
\usepackage{xcolor}
\usepackage[hidelinks]{hyperref}

\RequirePackage{geometry}
\geometry{a4paper,portrait,left=3.5cm,right=3.5cm,top=3.5cm,bottom=3.5cm}

\newcommand\blfootnote[1]{%
  \begingroup
  \renewcommand\thefootnote{}\footnote{#1}%
  \addtocounter{footnote}{-1}%
  \endgroup
}

\newcommand{\bR}{\mathbb{R}}
\newcommand{\bE}{\mathbb{E}}
\newcommand{\ep}{\epsilon}
\newcommand{\bI}{\mathbbm{1}}
\newcommand{\bP}{\mathbb{P}}

\newcommand{\CL}{\mathcal{L}}
\newcommand{\CH}{\mathcal{H}}

\newcommand{\comment}[1]{} 

\newtheorem{proposition}{Proposition}[section]
\newtheorem{theorem}[proposition]{Theorem}

\newtheorem{corollary}[proposition]{Corollary}
\newtheorem{assumptions}[proposition]{Assumptions}

\title{ On convergence of volume of level sets of stationary smooth Gaussian fields}
\author{Dmitry Beliaev \footnote{Mathematical Institute, University of Oxford, UK } \and Akshay Hegde \footnotemark[1] }
\date{}

\setlength\parindent{0pt}

\begin{document}

\maketitle

\begin{abstract}
 We prove convergence of Hausdorff measure of level sets of smooth Gaussian fields when the levels converge. Given two coupled stationary fields  $f_1, f_2$ , we estimate the difference of Hausdorff measure of level sets in expectation, in terms of $C^2$-fluctuations of the field $F=f_1-f_2$. The main idea in the proof is to represent difference in volume as an integral of mean curvature using the divergence theorem. 
 This approach is different from using Kac-Rice type formula as main tool in the analysis. 
\end{abstract}


\blfootnote{Emails: \href{mailto:belyaev@maths.ox.ac.uk}{belyaev@maths.ox.ac.uk}, \href{mailto: hegde@maths.ox.ac.uk}{hegde@maths.ox.ac.uk} }
\blfootnote{\textit{2020 Mathematics Subject Classification. 60G60, 60G15, 53A07.} }
\blfootnote{\textit{Keywords and phrases.} Gaussian fields, level sets.}

\section{Introduction}

Smooth Gaussian fields appear naturally in some areas of mathematics and physics. Random plane waves are conjectured to describe local behaviour of particles in chaotic domains, to give an example \cite{Berry_1977}. 
It also interacts well with other topics such as percolation theory \cite{Belyaev_survey}. Random fields have found some applications in areas as diverse as oceanography \cite{oceanography}, cosmology \cite{cosmology}, medical imaging \cite{medical-imaging}.

\vspace{0.3cm}

Studying geometrical and topological properties of the field, especially of level/excursion sets of the field is of great interest. 
Particularly, functionals such as volume of level sets, number connected components of level sets are well studied (see \cite{wigman-nodal-survey-2022},\cite{sodin-lecture-notes}).
In problems involving Gaussian fields, sometimes one needs to compare two fields, say by coupling them, when their laws are close.
Comparing geometric observables are of particular interest. 
We show that, with probability close to one, difference in Hausdorff measures of \emph{nodal sets} (i.e. the zero sets) of coupled fields with `close' laws is small. 
The main idea in the proof is to represent difference in volumes of level sets as an integral of mean curvature of the hypersurface using the divergence theorem. 
This representation is classical in Riemannian geometry and has been used extensively in study of minimal surfaces \cite[Chapter 1]{Lawson-minimal-submanifolds}.
The novelty is to get an average estimate of the difference in volumes in the context of Gaussian fields.
Also, we don't rely on Kac-Rice (or any other variation of co-area formula) for the analysis of volume of level sets, which is a standard tool in this topic.
As a by product, we give an explicit formula for the mean curvature of level sets at a given level. 
We believe that proving convergence in distribution of Hausdorff measure of level sets can be done by following the proof idea of Kac-Rice as presented in \cite[Theorem 6.2]{AW09}. 
But it might not be as straight forward as our proof, and proving other convergences might require some new ideas.

\vspace{0.3cm}

\section{Results}

In this article, we consider smooth Gaussian fields $f: \bR^d \to \bR $ with mild non-degeneracy conditions, of fixed dimension $d \geq 2$.
Call a field \emph{stationary} if the covariance kernel $K(x,y)=\bE[f(x)f(y)]$ is translation invariant. Now for stationary fields, the kernel $K$ is a Fourier transform of a finite symmetric Borel measure $\rho$, called spectral measure (normalised to probability measure in this article).

Fix a domain $D= [-R,R]^d \subset \bR^d$. 
Consider two $C^2$-smooth Gaussian fields $f_1,f_2: \bR^d \to \bR$ and a coupling of the fields $f_1, f_2$, by abuse of notation, such that $F=f_1-f_2$ has the $C^2$-fluctuations 

\[\sigma^2_D : =  \sup_{x \in D} \sup_{|\alpha |\leq 2} \text{Var} [ \partial^{\alpha}F(x)].\]

\begin{assumptions}  Assume that the fields $f_1,f_2$ are 
\begin{enumerate} \label{assumptions}
    \item stationary, $C^2$-smooth a.s.
    \item non-degenerate, i.e. $(f_i,\nabla f_i)$ has density in $\bR^{d+1}$ for $i=1,2$.
    \item Morse functions a.s.
\end{enumerate}
\end{assumptions}

\vspace{0.3cm}

Let $\CL^n$ denote $n$-dimensional Lebesgue measure. Let $\CH^n$ denote the $n$- dimensional Hausdorff measure, which is scaled so that $\CH^n([0,1]^n)=\CL^n([0,1]^n)$.
Note that by Bulinskaya lemma (see \cite[section 5.3]{Nazarov-Sodin-components-appendix}), a.s. nodal sets are sub-manifolds of $\bR^d$ of co-dimension one.
So we interchangeably use the terms volume and Hausdorff measure.

\begin{theorem} \label{main-theorem}
Let $\CH^{d-1}(f_i^{-1}(a))$ denote the volume of level sets in the domain $D$. With the setup as above, we have 
\[
\bE|\CH^{d-1}(f_1^{-1}(0))-\CH^{d-1}(f_2^{-1}(0))| \leq C(f_1,f_2)( \CL^d(D) \sqrt{\log R}) \sigma_D^{1/7}
\]
assuming $\sigma_D$ is small enough (say, $\sigma_D <1$). Here, the constant $C(f_1,f_2)$ depends only on the laws of the fields and not the coupling. 
\end{theorem}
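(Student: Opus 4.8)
The plan is to connect the two nodal sets by the linear interpolation $f_t := (1-t)f_1 + tf_2 = f_1 - tF$, $t\in[0,1]$, and to track the area $V(t) := \CH^{d-1}(f_t^{-1}(0)\cap D)$ as $t$ runs from $0$ to $1$. For a.e.\ $t$ the field $f_t$ is again a non-degenerate Morse field, so $f_t^{-1}(0)\cap D$ is a smooth hypersurface-with-boundary with unit normal $N_t = \nabla f_t/|\nabla f_t|$. Since $\partial_t f_t = -F$, the normal speed of the moving surface is $F/|\nabla f_t|$, and the first variation of area (equivalently, the divergence theorem applied to $N_t$ on the space--time region) gives
\[
V(1)-V(0) = \int_0^1\!\!\int_{f_t^{-1}(0)\cap D} H_t\,\frac{F}{|\nabla f_t|}\,d\CH^{d-1}\,dt \; + \; \mathcal B,
\]
where $H_t = \operatorname{div}N_t = \operatorname{div}(\nabla f_t/|\nabla f_t|)$ is the mean curvature of the level set and $\mathcal B$ is a boundary term supported on $f_t^{-1}(0)\cap\partial D$. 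I would record the explicit formula $H_t = |\nabla f_t|^{-1}\big(\Delta f_t - |\nabla f_t|^{-2}\langle\nabla^2 f_t\,\nabla f_t,\nabla f_t\rangle\big)$, and in particular the bound $|H_t|\le (d-1)\,\|\nabla^2 f_t\|_{\mathrm{op}}/|\nabla f_t|$.

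The key step --- and the reason Kac--Rice is not needed --- is to collapse the double integral to a single Lebesgue integral. On $\Omega := \{x\in D : f_1(x)f_2(x)<0\}$ each $x$ is met by exactly one interpolating nodal set, at time $t^*(x) = f_1(x)/F(x)\in(0,1)$; equivalently the space--time zero set $Z = \{(x,t): f_t(x)=0\}$ is a graph over $\Omega$. Applying the coarea formula to the slicing of $Z$ by $t$ and then the graph change of variables, the weight $F/|\nabla f_t|$ cancels exactly against the Jacobians, yielding
\[
V(1)-V(0) = \int_{\Omega} H_{t^*(x)}(x)\,d\CL^d(x)\; + \;\mathcal B,
\qquad\text{so}\qquad
\bE|V(1)-V(0)| \le \int_{D} \bE\big[\,|H_{t^*(x)}(x)|\,\bI_{\{x\in\Omega\}}\big]\,d\CL^d(x) + \bE|\mathcal B|.
\]
By stationarity the integrand is independent of $x$ in the interior, so $\int_D$ contributes the factor $\CL^d(D)$, and it remains to bound the single pointwise expectation $\bE\big[\,|H_{t^*(0)}(0)|\,\bI_{\{0\in\Omega\}}\big]$.

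Fix $x=0$. Two competing effects must be balanced: the indicator $\bI_{\{0\in\Omega\}}$ forces $f_1(0)$ and $f_2(0)$ to straddle $0$, an event of probability $\lesssim\sigma_D$ (bounded density of $f_1(0)$ near $0$ and $\bE|F(0)|\le\sigma_D$), while $|H_{t^*}|$ blows up where $|\nabla f_{t^*}|$ is small. Because $t^* = f_1(0)/F(0)$ is a nonlinear functional of the field, $\nabla f_{t^*}(0)$ is not Gaussian, so I reduce to the genuinely Gaussian vector $(f_1,\nabla f_1,\nabla^2 f_1)(0)$: as $\nabla f_t$ and $\nabla^2 f_t$ are affine in $t$, one has $\|\nabla^2 f_{t^*}(0)\|\le\max_i\|\nabla^2 f_i(0)\|$ and $|\nabla f_{t^*}(0)-\nabla f_1(0)|\le\sup_D|\nabla F|$. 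Using Borell--TIS to bound $\sup_D|\nabla F|\lesssim\sigma_D\sqrt{\log R}=:s$ off a negligible event, $|\nabla f_{t^*}(0)|$ is comparable to $|\nabla f_1(0)|$ whenever $|\nabla f_1(0)|\gtrsim s$; this is the mechanism by which $\sqrt{\log R}$ enters. I then split on a gradient threshold $\delta\gtrsim s$: on $\{|\nabla f_1(0)|\ge\delta\}$ bound $|H_{t^*}|\lesssim\|\nabla^2 f_1\|/\delta$ and pay the factor $\bP(0\in\Omega)\lesssim\sigma_D$; on $\{|\nabla f_1(0)|<\delta\}$ use $|H_{t^*}|\lesssim\|\nabla^2 f_1\|/|\nabla f_1|$ together with the finite negative moments $\bE|\nabla f_1|^{-p}<\infty$ ($p<d$, from non-degeneracy) and $\bP(|\nabla f_1|<\delta)\lesssim\delta^{d}$. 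Optimising $\delta$ as a power of $\sigma_D$ and Hölder-splitting the factors balances the two regimes and yields $\sigma_D^{1/7}$; the precise exponent reflects the crudeness of the truncation rather than a sharp phenomenon.

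I expect the genuine obstacle to be exactly this last step: the curvature integrand is unbounded near critical points, and because the selecting time $t^*(x)$ is random one cannot apply Gaussian estimates to $f_{t^*}$ directly. The device of comparing $f_{t^*}$ with the honestly Gaussian $f_1$ through the affine $t$-dependence and the maximal bounds on $F,\nabla F$ is what makes the negative-moment estimates usable, and the resulting multi-threshold optimisation is where the loss (and the concrete exponent $1/7$) is incurred. The boundary term $\mathcal B$ should be lower order: it lives on $f_t^{-1}(0)\cap\partial D$, scales like $\CH^{d-2}\times\sup_D|F|/|\nabla f_t|\sim R^{d-1}s$, and is therefore dominated by the main bound $\CL^d(D)\sqrt{\log R}\,\sigma_D^{1/7}\sim R^d\sqrt{\log R}\,\sigma_D^{1/7}$ for $R\ge1$ and $\sigma_D<1$; I would nonetheless verify its expectation separately by the same reduction to $f_1$ restricted to the faces of $\partial D$.
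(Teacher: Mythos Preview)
Your route via linear interpolation and the first variation of area is genuinely different from the paper's. The paper never interpolates: it applies the divergence identity (Proposition~\ref{prop: divergence-thm}) to each field separately with $b=0$, $a\to-\infty$, so that
\[
\CH^{d-1}(f_i^{-1}(0))=\int_D \kappa_i\,\bI_{f_i\le 0}\,d\text{vol}-\oint_{\partial D}\Big\langle \frac{\nabla f_i}{|\nabla f_i|},\hat\eta\Big\rangle\bI_{f_i\le 0}\,dS,
\]
and then subtracts. The bulk integrand $\kappa_1\bI_{f_1\le0}-\kappa_2\bI_{f_2\le0}$ is split into $(\kappa_1-\kappa_2)\bI_{f_1,f_2<0}$ and two terms $\kappa_i\bI_{f_1f_2<0}$. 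The point is that $\kappa_1,\kappa_2$ are each explicit rational functions of a \emph{fixed} Gaussian vector, so their $(1+\alpha)$-moments (for $\alpha<1$) are available directly, while $|\kappa_1-\kappa_2|$ is bounded by a polynomial in $\|f_1-f_2\|_{C^2(D)}$ once $|\nabla f_1|,|\nabla f_2|\ge\delta$. No random selection time enters anywhere.

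Your coarea collapse to $\int_\Omega H_{t^*(x)}(x)\,dx$ is correct and attractive, but the estimate you propose on the small-gradient set has a gap. You assert $|H_{t^*}|\lesssim\|\nabla^2 f_1\|/|\nabla f_1|$ on $\{|\nabla f_1|<\delta\}$; however your own comparison $|\nabla f_{t^*}-\nabla f_1|\le\sup_D|\nabla F|\lesssim s$ only yields $|\nabla f_{t^*}|\gtrsim|\nabla f_1|$ when $|\nabla f_1|\gtrsim s$. On the genuinely dangerous set $\{|\nabla f_1|\lesssim s\}$ you have \emph{no} lower bound on $|\nabla f_{t^*}|$, hence no pointwise control of $|H_{t^*}|$. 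Because $t^*=f_1/F$ is a nonlinear functional of the coupled field, you also cannot fall back on moments: neither $\bE|H_{t^*}|^{1+\alpha}$ nor $\bE\sup_{t\in[0,1]}|H_t|^{1+\alpha}$ is obviously finite, since $\inf_t|\nabla f_t|$ may vanish even when $\nabla f_1,\nabla f_2$ do not. This is exactly the obstruction the paper's sublevel-set representation is designed to avoid: by keeping the curvatures $\kappa_1,\kappa_2$ attached to their own Gaussian fields rather than to the interpolant at a random time, the moment bounds from the technical section apply without further work. Your scheme may be salvageable (for instance by undoing the coarea collapse on the bad set and estimating the $t$-integral for each fixed, hence non-degenerate, $f_t$), but as written the small-gradient regime does not close and the exponent $1/7$ is not yet justified.

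A smaller point: asserting that $\bE[|H_{t^*}|\bI_\Omega]$ is independent of $x$ requires \emph{joint} stationarity of $(f_1,f_2)$, which is not assumed; the paper handles the analogous issue by taking a supremum over $x\in D$ of the pointwise correlation $\rho(x)$.
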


The factor $\sqrt{\log R}$ appearing in the above theorem is from the quantitative version of Kolmogorov's existence theorem for smooth Gaussian fields as stated in \cite[Appendix A]{Nazarov-Sodin-components-appendix}.
Also, the exponent $1/7$ in $\sigma_D^{1/7}$ is not optimal, and can be made close to $1/4$.  We believe optimal exponent of $\sigma_D$ is $1$ due to cancellations in the integral of mean curvature in the bulk. 

\vspace{0.3cm}

We make some comments on the assumptions on the fields. 
We would like to point out that the proof of Theorem \ref{main-theorem} works for non-stationary, non-degenerate fields with positive lower bounds on fluctuations of the field and its derivatives, with suitable modifications. We have not used stationarity in any serious way, but the estimations of the bounds become much easier if we assume stationarity. 
Also, assumption that the fields are a.s. Morse functions is not very restrictive and many interesting non-degenerate fields we know are Morse functions a.s.
It can be shown that stationary fields with spectral measures containing an open sets are Morse a.s. 
If the field is isotropic, then also we can show that the field is Morse a.s.
In particular, random plane wave (RPW) model and Bargmann-Fock field (on $\bR^d$) are Morse a.s. 

\vspace{0.3cm}

One such coupling of fields is available using coupling of white noises (see \cite{Belyaev-Maffucci-coupling}). The coupling as in \cite{Belyaev-Maffucci-coupling} gives the following estimate for the fluctuations of the field $F=f_1-f_2$. We have,

\[\sigma_D^2 \leq C (R^d+1) \inf_{ \rho \in \mathcal{P}(\rho_1, \rho_2)} \int (|s|^2+|t|^2+1)^{2+1} |s-t|^2 d \rho(s,t) \]

where $\mathcal{P}(\rho_1, \rho_2)$ is the space of all symmetric couplings of $\rho_1$ and $\rho_2$ and $C$ is a absolute constant. 

Now by the coupling techniques mentioned above, $\sigma_D$ can be controlled by the transport distances between the measures in the domain (in the general case) or by norm of differences in spectral densities (in special cases).
Let's give an example where this is useful. Recall that the spectral measure of random planar waves is the uniform measure on the unit circle in $\bR^2$. 
We can approximate this measure, in the transport distance mentioned above, by a measure supported on finite points. 
This field corresponds to a finite interference of pure sine waves. So we can quantitative bounds on the difference of lengths.

\vspace{0.3cm}

To prove Theorem \ref{main-theorem}, first we study convergence of volume of level sets using the divergence theorem.
Although expressing change in volume of a hypersurface in normal direction in terms of mean curvature is classical as previously mentioned, we need the version as in Proposition \ref{prop: divergence-thm}.

\vspace{0.3cm}

\begin{proposition} \label{prop: divergence-thm}

Let $f: \bR^d \to \bR$ be a non-degenerate, $C^2$-smooth Gaussian field which is Morse function a.s. Let $\CH^{d-1}(f^{-1}(a))$ denote the volume of level set $f^{-1}(a)$ in $D$. Then, almost surely, we have

\begin{equation} \label{eq: div-thm}
    \CH^{d-1}(f^{-1}(b))-\CH^{d-1}(f^{-1}(a)) = \iint_D \kappa \bI_{f \in [a,b]} d \text{vol} - \oint_{\partial D} \left \langle \dfrac{\nabla f}{|\nabla f|}, \hat{\eta} \right \rangle \bI_{f \in [a,b]} d\text{S}
\end{equation}

where 
\[ \kappa = \text{div} \left (\dfrac{\nabla f}{|\nabla f|} \right )\]
is ($(d-1)$-times) the mean curvature of level set of $f$ at $x$ and $\hat{\eta}$ is the outward unit normal to $\partial D$ . We also have 
\[
\CH^{d-1}(f^{-1}(b)) \to \CH^{d-1}(f^{-1}(a)), \text{ as } b \to a
\]
almost surely and in $L^1$.
\end{proposition}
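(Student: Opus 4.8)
The plan is to derive the identity \eqref{eq: div-thm} by applying the classical divergence theorem to the unit vector field $V = \nabla f/|\nabla f|$ on the slab $A := \{x \in D : a \le f(x) \le b\}$ (taking $a<b$ without loss of generality), and then to read off both convergence statements from it. First I would record the regularity that the hypotheses supply for free: by the Bulinskaya lemma together with non-degeneracy, for fixed levels $a,b$ the sets $f^{-1}(a),f^{-1}(b)$ are a.s.\ smooth hypersurfaces carrying no critical point of $f$, meeting $\partial D$ transversally, with $\nabla f \ne 0$ on $\partial D$; and since $f$ is a.s.\ Morse, it has finitely many critical points in $\bar D$, of which only those with value in $(a,b)$ — say $x_1,\dots,x_m$ — lie in the interior of $A$. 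On $A \setminus \bigcup_j \overline{B_\ep(x_j)}$ the field $V$ is $C^1$, so the divergence theorem applies, and the flux $\langle V,n\rangle$ is immediate on each outer boundary piece: on $f^{-1}(b)\cap D$ the outward normal is $+V$, giving $+\CH^{d-1}(f^{-1}(b))$; on $f^{-1}(a)\cap D$ it is $-V$, giving $-\CH^{d-1}(f^{-1}(a))$; and on $\partial D$ it is $\hat\eta$, giving the boundary integral in \eqref{eq: div-thm}. Letting $\ep\to0$, on each excised sphere $|V|=1$ and the surface area is $O(\ep^{d-1})$, so the spherical flux vanishes; the bulk integral converges to $\iint_D \kappa\, \bI_{f\in[a,b]}\,d\mathrm{vol}$ by dominated convergence once $\kappa$ is known to be locally integrable near each $x_j$. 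Writing $\kappa = \Delta f/|\nabla f| - \langle \hat n, \mathrm{Hess}(f)\,\hat n\rangle/|\nabla f|$ with $\hat n=\nabla f/|\nabla f|$, and using $|\nabla f(x)| \asymp |x-x_j|$ at a non-degenerate critical point, one gets $|\kappa(x)| \lesssim |x-x_j|^{-1}$, which is integrable in $\bR^d$ precisely because $d\ge2$; this establishes \eqref{eq: div-thm}.

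For the almost sure convergence as $b\to a$ I would use \eqref{eq: div-thm} directly. The indicator $\bI_{f\in[a,b]}$ decreases pointwise to $\bI_{f=a}$, and the set $\{f=a\}$ has $\CL^d$-measure zero in $D$ and $\CH^{d-1}$-measure zero on $\partial D$ almost surely (by the bounded density of $f(x)$ and Fubini). Since $\kappa \in L^1(D)$ and the boundary integrand is bounded by $1$, dominated convergence sends both terms on the right of \eqref{eq: div-thm} to $0$, so $\CH^{d-1}(f^{-1}(b)) \to \CH^{d-1}(f^{-1}(a))$ a.s.

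For the $L^1$ convergence I would exhibit an integrable dominating variable rather than invoke Kac--Rice. From \eqref{eq: div-thm},
\[
\bigl|\CH^{d-1}(f^{-1}(b)) - \CH^{d-1}(f^{-1}(a))\bigr| \le \iint_D |\kappa|\, d\mathrm{vol} + \CH^{d-1}(\partial D) =: Z,
\]
a bound uniform in $b$, so it suffices to check $\bE[Z]<\infty$. By Tonelli and stationarity, $\bE\bigl[\iint_D |\kappa|\,d\mathrm{vol}\bigr] = \CL^d(D)\,\bE|\kappa(0)|$, and from $|\kappa(0)| \lesssim \|\mathrm{Hess}\,f(0)\|/|\nabla f(0)|$ together with Hölder's inequality, $\bE|\kappa(0)| \le \bE[\|\mathrm{Hess}\,f(0)\|^p]^{1/p}\,\bE[|\nabla f(0)|^{-q}]^{1/q}$. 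Here $\|\mathrm{Hess}\,f(0)\|$ has all Gaussian moments while $\bE[|\nabla f(0)|^{-q}]<\infty$ for every $q<d$ (the gradient has bounded density on $\bR^d$), so choosing $1<q<2\le d$ makes $\bE[Z]$ finite. Dominated convergence in expectation, combined with the a.s.\ convergence above, then yields the $L^1$ statement.

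The main obstacle is the singular behaviour of $V=\nabla f/|\nabla f|$ at the critical points of $f$ inside the slab: one must justify simultaneously the excision limit in the divergence theorem (vanishing of the spherical flux) and the integrability $\kappa\in L^1$ as well as $\bE|\kappa(0)|<\infty$. All three hinge on the sharp blow-up rate $|\kappa| \lesssim |x-x_j|^{-1}$ near a non-degenerate critical point, and hence crucially on the standing assumption $d\ge2$.
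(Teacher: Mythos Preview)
Your derivation of the identity \eqref{eq: div-thm} is essentially the paper's: apply the divergence theorem to $\nabla f/|\nabla f|$ on the slab, excise small balls around the (finitely many, Morse) critical points, and pass to the limit using the local bound $|\kappa(x)|\lesssim |x-x_j|^{-1}$, which is integrable exactly because $d\ge 2$. The only structural difference is that the paper also treats the case where $a$ or $b$ is a critical value (via the Morse lemma in local charts), whereas you observe that for \emph{fixed} $a,b$ this event has probability zero; for the proposition as stated your shortcut is legitimate.

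Where you genuinely diverge from the paper is in the two convergence statements. For the almost-sure convergence the paper argues pointwise continuity of $b\mapsto \CH^{d-1}(f^{-1}(b))$ at every level, including critical ones, by reducing to quadratic functions through the Morse lemma; you instead feed the identity \eqref{eq: div-thm} back into itself and let dominated convergence kill both right-hand terms as $b\to a$. This is cleaner, but note that you need \eqref{eq: div-thm} to hold simultaneously for all $b$ near $a$ on a single almost-sure event; this is fine because a.s.\ $a$ is regular and the finitely many critical values leave a full neighbourhood of $a$ consisting of regular levels, so your Case~1 argument applies uniformly there. For the $L^1$ convergence the paper invokes the Kac--Rice formula to get convergence of expectations and then Scheff\'e's lemma; you instead produce an integrable dominating variable $Z=\int_D|\kappa|\,d\mathrm{vol}+\CH^{d-1}(\partial D)$ and check $\bE[Z]<\infty$ via $\bE[|\nabla f(0)|^{-q}]<\infty$ for $q<d$. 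Your route is more self-contained and, amusingly, better matches the paper's own advertised goal of avoiding Kac--Rice; the paper's route has the side benefit of giving continuity at \emph{every} level of a fixed realisation, which is a slightly stronger deterministic statement than what the proposition requires.
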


\vspace{0.3cm}
As a corollary, we get the following formula for the mean curvature of level sets at a given level. Usually, it is hard to get such explicit formula for general fields.

\begin{corollary} \label{corollary} With assumptions as in Theorem \ref{main-theorem} , we have
\[ \bE[\kappa | f=a]= -a \bE[|\nabla f|].  \]
\end{corollary}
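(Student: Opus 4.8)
The plan is to deduce the identity directly from the divergence-theorem representation in Proposition~\ref{prop: divergence-thm}, by taking expectations, using the Kac--Rice formula for the expected volume, and then differentiating in the level. First I would take expectations on both sides of \eqref{eq: div-thm}. Since the field is stationary, $f(x)\sim\CN(0,1)$ for every $x$ (the spectral measure is normalised to a probability measure), and because the covariance is even we have $\mathrm{Cov}(f,\partial_i f)=0$, so $f(x)$ and $\nabla f(x)$ are independent. By Fubini and stationarity the bulk term becomes
\[
\bE\Big[\iint_D\kappa\,\bI_{f\in[a,b]}\,d\text{vol}\Big]=\CL^d(D)\int_a^b\bE[\kappa\mid f=t]\,p(t)\,dt,
\]
where $p$ is the standard Gaussian density. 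The boundary term vanishes in expectation: at each $x\in\partial D$ the vector $\nabla f/|\nabla f|$ is independent of $f(x)$ and has mean zero by the symmetry $\nabla f\mapsto-\nabla f$, so $\bE\big[\langle\nabla f/|\nabla f|,\hat\eta\rangle\,\bI_{f\in[a,b]}\big]=0$.

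Next I would evaluate the left-hand side via Kac--Rice. For a stationary field the expected volume of a level set is
\[
\bE[\CH^{d-1}(f^{-1}(a))]=\CL^d(D)\,\bE[\,|\nabla f|\mid f=a\,]\,p(a)=\CL^d(D)\,\bE[\,|\nabla f|\,]\,p(a),
\]
the last step again using $f\perp\nabla f$. Substituting both computations into the expectation of \eqref{eq: div-thm} gives, for all $a<b$,
\[
\int_a^b\bE[\kappa\mid f=t]\,p(t)\,dt=\bE[\,|\nabla f|\,]\,\big(p(b)-p(a)\big).
\]
Differentiating in $b$ and relabelling yields $\bE[\kappa\mid f=a]\,p(a)=\bE[\,|\nabla f|\,]\,p'(a)$, and since $p'(a)=-a\,p(a)$ this is exactly $\bE[\kappa\mid f=a]=-a\,\bE[\,|\nabla f|\,]$.

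The hard part will be justifying the differentiation in the level together with the attendant integrability: I need $t\mapsto\bE[\kappa\mid f=t]\,p(t)$ to be continuous so the fundamental theorem of calculus applies, and I need $\kappa$ to be integrable (equivalently, control of $1/|\nabla f|$ near small gradient values), which is where non-degeneracy and the Morse assumption enter. A cleaner alternative that sidesteps differentiating in $a$ is a direct computation: expand $\kappa=\Delta f/|\nabla f|-\langle\nabla f,\mathrm{Hess}\,f\,\nabla f\rangle/|\nabla f|^3$, use that $\nabla f$ is independent of $(f,\mathrm{Hess}\,f)$ and that $\bE[\partial_{ij}f\mid f=a]=-a\,\Lambda_{ij}$ with $\Lambda=\mathrm{Cov}(\nabla f)$, and then close the argument with the Gaussian integration-by-parts identity $\bE\big[\mathrm{tr}\,\Lambda/|\nabla f|-\langle\nabla f,\Lambda\nabla f\rangle/|\nabla f|^3\big]=\bE[\,|\nabla f|\,]$. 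This second route trades the analytic subtlety of differentiating in the level for a short Stein-type identity, and reassuringly lands on the same constant.
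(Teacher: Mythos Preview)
Your main argument is correct and follows essentially the same path as the paper: take expectations in \eqref{eq: div-thm}, use Fubini and stationarity, invoke Kac--Rice for the left-hand side, and then pass from the integrated identity to the pointwise one by differentiating in the level (the paper phrases this last step as dividing by $b-a$ and sending $b\to a$, which is the same thing). Your treatment of the boundary term via the symmetry $\nabla f\mapsto-\nabla f$ together with $f\perp\nabla f$ is in fact slightly cleaner than the paper's, which instead cancels contributions from opposite faces of $\partial D$; both are valid under stationarity.

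Your closing alternative---conditioning the explicit formula for $\kappa$ on $f=a$ using $\nabla f\perp(f,\mathrm{Hess}\,f)$, the regression $\bE[\partial_{ij}f\mid f=a]=-a\Lambda_{ij}$, and then the Stein identity $\bE[\mathrm{tr}\,\Lambda/|\nabla f|-\nabla f^{\!\top}\Lambda\nabla f/|\nabla f|^3]=\bE[\,|\nabla f|\,]$---is a genuinely different and self-contained route that the paper does not take; it avoids Proposition~\ref{prop: divergence-thm} and the limit in the level entirely, at the cost of requiring the pointwise independence structure specific to stationary Gaussian fields.
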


\section{Proofs}

\begin{proof}[Proof of Proposition \ref{prop: divergence-thm}]

Note that $f$ has only finitely many critical points in $D$ a.s. 
We prove in subsection \ref{technical-bits} that $\kappa$ as a function on $D$ is integrable almost surely. 
We also can assume that $f$ has no critical points on $\partial D$. This is because of Bulinskaya lemma, since $\partial D $ is $(d-1)$-dimensional and for non-degenerate, smooth Gaussian $f$ the gradient $\nabla f$ has (Gaussian) density on $\bR^d$. 

\vspace{0.3cm}

\textit{Case 1:} $a,b$ are regular values of $f$. 

\vspace{0.3cm}

Let $R'= D \cap f^{-1}[a,b]$ and the unit outward normal $\hat{\eta}= -{\nabla f}/{|\nabla f|}$ on $f^{-1}(a)$, $\hat{\eta}= {\nabla f}/{|\nabla f|}$ on $f^{-1}(b)$ (assuming $a<b$), outward normal on parts of $\partial D \cap f^{-1}(a,b)$. Assume that $R'$ has no critical points of $f$ and we know that $\kappa$ is continuous except at critical points of $f$.
Apply Greens formula for the function ${\nabla f}/{|\nabla f|}$ on $R'$, we get

\begin{equation}
\begin{split} 
    \int_{f^{-1}(b) \cap D} \left \langle  \dfrac{\nabla f}{|\nabla f|}, \hat{\eta}  \right \rangle dS + & \int_{f^{-1}(a) \cap D} \left \langle  \dfrac{\nabla f}{|\nabla f|}, \hat{\eta} \right \rangle dS + \\ & \oint_{\partial D} \left \langle \dfrac{\nabla f}{|\nabla f|}, \hat{\eta} \right \rangle \bI_{f \in [a,b]} dS 
    = \iint_{R'} \text{div} \left ( \dfrac{\nabla f}{|\nabla f|} \right ) d\text{vol}. 
    \end{split}
\end{equation}

But first two terms of LHS of above equation are $\CH^{d-1}(f^{-1}(b)), -\CH^{d-1}(f^{-1}(a))$ respectively. Hence we get equation \eqref{eq: div-thm} in this case.

\vspace{0.3cm}
If $R'$ has critical points of $f$, then the number of critical points has to be finite. Let $\{x_1, x_2, \ldots x_k\}$ be the critical points in $R'$. Now apply the divergence theorem to the field $\nabla f/ |\nabla f|$ on $R' \setminus \cup_j B_{\delta}(x_j)$. 
Letting $\delta \to 0$, and using integrability $\kappa$ on $D$ (see subsection \ref{technical-bits}), we again get equation (\ref{eq: div-thm}). 

\vspace{0.3cm}

\textit{Case 2:} $a$ or $b$ (or both) are critical values of $f$.

\vspace{0.3cm}

First, let us show continuity of volume of level sets at all levels, including at critical values of $f$. Fix a critical value $a$ of $f$.
By Morse lemma, $f$ can be made a quadratic function at a critical point by re-parametrisation. 
Let $p$ be a critical point, then there is a neighborhood $U$ of $p$ and a smooth chart $(y_1,y_2, \ldots, y_d)$ such that $y_i(p)=0$ and 
\[f(y)= f(p) \pm y_1^2 \pm y_2^2 \cdots \pm y_d^2. \]
We know that the volume of level sets of quadratic functions are continuous. So, given a critical point $p$ of $f$ at level $a$, volume of level sets of $f$ in a neighborhood $U$ of $p$ converge when the levels converge to $a$.
When $x_0 \in f^{-1}(a)$ is a regular point, then there exists a neighborhood $U_{x_0}$ such that the volume of level sets are continuous. 
This follows from the implicit function theorem. Now, using compactness of $f^{-1}(a) \cap D$, we get that volume of level sets is continuous at any arbitrary level.

\vspace{0.3cm}

Since the number of critical values of $f$ is finite in $D$, any critical level in $D$ can be approximated by regular levels of $f$ in $D$.
Let $\ep_n$ be a sequence converging to zero such that $(b-\ep_n), (a+\ep_n)$ are sequences of regular values of $f$. 
By continuity of the volume of level sets, we have 
\[\CH^{d-1}(f^{-1}(b))- \CH^{d-1}(f^{-1}(a)) = \lim_{n \to \infty} [L(b- \ep_n)- L(a+ \ep_n)].\]
Using case 1, we have the integral formula for difference of volume of level sets. Note that 

\[\left \langle \dfrac{\nabla f}{|\nabla f|}, \hat{\eta} \right \rangle \bI_{f \in [a + \ep_n,b-\ep_n]} \to \left \langle \dfrac{\nabla f}{|\nabla f|}, \hat{\eta} \right \rangle \bI_{f \in [a,b]}, \]
\[\kappa \bI_{f \in [a+ \ep_n,b-\ep_n]} \to \kappa \bI_{f \in [a,b]} \]
pointwise. Hence by the dominated convergence theorem, we have equation \eqref{eq: div-thm} for case 2 as well. 

\vspace{0.3cm}

We have that $\CH^{d-1}(f^{-1}(b))\to \CH^{d-1}(f^{-1}(a))$ as $b \to a$ a.s. by above discussion of continuity of length w.r.t levels. 
We also have $\bE [\CH^{d-1}(f^{-1}(b))] \to \bE[\CH^{d-1}(f^{-1}(a))]$ when $b \to a$ by Kac-Rice formula. Hence, by Scheffe's lemma, we have $L^1$ convergence. 

\end{proof}

\vspace{0.3cm}

\begin{proof}[Proof of Corollary \ref{corollary}] 
Take expectation to both sides of the equation (\ref{eq: div-thm}). Switching integration and expectation because of Fubini's theorem,  we get 
\[\bE [\CH^{d-1}(f^{-1}(b))]- \bE[\CH^{d-1}(f^{-1}(a))] + \oint_{\partial D} \bE \left [ \left \langle \dfrac{\nabla f}{|\nabla f|}, \hat{\eta} \right  \rangle \bI_{f \in [a,b]} \right ] dS = \iint_{D} \bE[ \kappa \bI_{f \in [a,b]} ] d\text{vol}. \]

Now, dividing above equation by $b-a$ and taking the limit $b \to a$, we get 
\begin{equation} \label{eq:exp-level-volume}
  -a p(a) \CL^d(D) \bE[|\nabla f|]  + \oint_{\partial D} \bE \left [ \left \langle \dfrac{\nabla f}{|\nabla f|}, \hat{\eta} \right \rangle | f=a \right ] p(a) d\text{S} = \iint_{D} \bE[ \kappa| f=a ] p(a) d\text{vol}
  \end{equation}

where $p$ is the pdf of standard Gaussian random variable. 

\vspace{0.3cm}

First, from stationary Kac-Rice formula, we have 
\[ \lim_{b \to a} \dfrac{\bE [\CH^{d-1}(f^{-1}(b))]- \bE[\CH^{d-1}(f^{-1}(a))]}{b-a} = -a p(a) \CL^d(D) \bE[|\nabla f|].\]

Next, from the continuity of the Gaussian regression formula, we get the respective conditional expectations (see \cite[Theorem 3.2]{AW09} for an explanation).
Consider the expression $\bE[ \kappa \bI_{f \in [a,b]} ]$ and write it in the following form,

\[\bE[ \kappa \bI_{f \in [a,b]} ] = \int_a ^b \bE [\kappa | f= u] p(u) du \]

Now note that $\bE [\kappa | f= u]$ is continuous in $u$, hence $ (b-a)^{-1}\bE[ \kappa \bI_{f \in [a,b]} ] \to \bE[\kappa | f=a]$ as $b \to a$. 
By the dominated convergence theorem, we have 

\[ \lim_{b \to a} \dfrac{1}{b-a} \iint_{D} \bE[ \kappa \bI_{f \in [a,b]} ] d\text{vol} =  \iint_{D} \bE[ \kappa| f=a ] p(a) d \text{vol}. \]

A similar argument works for the claim 

\[ \lim_{b \to a} \dfrac{1}{b-a} \oint_{\partial D} \bE \left [ \left \langle \dfrac{\nabla f}{|\nabla f|}, \hat{\eta} \right  \rangle \bI_{f \in [a,b]} \right ] d\text{S} = \oint_{\partial D} \bE \left [ \left \langle \dfrac{\nabla f}{|\nabla f|}, \hat{\eta} \right \rangle | f=a \right ] p(a) d\text{S}. \]

So, we have the equation \eqref{eq:exp-level-volume}. 

\vspace{0.3cm}

Now, we claim that 
\[ \oint_{\partial D} \bE \left [ \left \langle \dfrac{\nabla f}{|\nabla f|}, \hat{\eta} \right \rangle | f=a \right ] d\text{S} = 0.\]

Since $\nabla f$ and $f$ are pointwise independent r.v. and by stationary, integral on a $(d-1)$-dimensional slab in $\partial D$ cancels that from the opposite slab.
Again by stationarity of $\kappa$, the equation (\ref{eq:exp-level-volume}) reduces to $\bE[\kappa | f=a] p(a)= -a \bE[|\nabla f|] p(a) $. Hence we have the Corollary \ref{corollary}.

\end{proof}

\vspace{0.3cm}

\begin{proof}[Proof of Theorem \ref{main-theorem}]
First, observe that $\CH^{d-1}(f^{-1}(a)) \to 0$ almost surely as $a \to \infty$ or as $a \to -\infty$, since probability that $f$ is unbounded on $D$ is zero.
Now, taking difference of equation (\ref{eq: div-thm}) applied to $f_1, f_2$ and taking $b=0$, $a \to -\infty$ we have, 
\begin{equation}  \label{eq: level_diff}
\begin{split}
    \CH^{d-1}(f_1^{-1}(0))-\CH^{d-1}(f_2^{-1}(0))= & \iint_{D} \left [ \kappa_1 \bI _{f_1 \leq 0} -\kappa_2 \bI_{f_2 \leq 0} \right ] d \text{vol}  \\ 
    & - \int_{\partial D} \left [ \left \langle \dfrac{\nabla f_1}{|\nabla f_1|}, \hat{\eta} \right \rangle  \bI_{f_1 \leq 0 } -  \left \langle \dfrac{\nabla f_2}{|\nabla f_2|}, \hat{\eta} \right \rangle  \bI_{f_2 \leq 0 }\right ] dS . 
    \end{split}
\end{equation}

We bound the bulk term and the boundary term of equation (\ref{eq: level_diff}) separately. 

\vspace{0.3cm}

\textbf{Bulk term}: First we have, 

\begin{equation} \label{eq: curv_bound}
\begin{split}
 \left | \int_{D} \left [ \kappa_1 \bI _{f_1 \leq 0} -\kappa_2 \bI_{f_2 \leq 0} \right ] d \text{vol} \right | \leq & \left | \int_{D} (\kappa_1 -\kappa_2) \bI[f_1,f_2 <0] d \text{vol} \right | \\ & +  \left | \int_{D} \kappa_1 \bI [f_1f_2 <0] d \text{vol} \right | + \left | \int_{D} \kappa_2 \bI [f_1f_2 <0] d \text{vol} \right | . 
\end{split}
\end{equation}

For the second term of equation (\ref{eq: curv_bound}) we show that, with probability close to one, $\CL^d(f_1f_2 <0)$ is small and that integral of curvature is bounded with high probability. 

\vspace{0.3cm}

Note that $\bE[|\kappa_1|^{1+\alpha}] < \infty$ for all $ 0< \alpha <1$ (see section \ref{technical-bits}). Take $\alpha= 1/2$ when applying H\"{o}lder inequality in the following computation.  
Given a point $x \in D$, recall that $\kappa_1(x)$ is the curvature of the level set $f^{-1}(c)$, where $x \in f^{-1}(c)$, at $x$. 

\begin{equation} \label{eq: curv-coupling}
\begin{split}
   \left | \bE  \int_D \kappa_1 \bI [f_1f_2<0] d \text{vol}  \right |  & \leq   \bE \left  | \int_D \kappa_1 \bI [f_1f_2<0] d \text{vol} \right | \\
    & \leq  \int_D \bE |  \kappa_1 \bI [f_1f_2<0] | d \text{vol}  \\
    & \leq (\bE|\kappa_1|^{3/2})^{2/3} \int_D \bP[f_1(x)f_2(x)<0]^{1/3} d \text{vol}   \\
    & \leq C_1 \cdot \CL^d(D)\sup _{D} [(\arccos(\rho(x)))^{1/3}]
    \end{split}
\end{equation}

where $\rho(x)$ is the correlation between $f_1(x)$ and $f_2(x)$,and the constant $C_1$ depends only on the law of the fields.
We have used the identity (\ref{eq: Guassian-product}) in the last inequality.
Note that $\arccos(x)= c_1 \sqrt{(1-x)}+ O((1-x)^{3/2})$ near $x=1$, where $c_1$ is a universal constant. 
We have that $ |1-\rho(x)| \leq \sigma^2_D/2 $ for all $x \in D$. Hence we have, 
\begin{equation} \label{eq: correlation-bound}
    \bE \left [ \left  | \int_D \kappa_1 \bI [f_1f_2<0] d \text{vol} \right | \right ] \leq C_2 \CL^d(D) \sigma_D^{1/3} 
\end{equation}
where the constant $C_2$ only depends on the spectral measure. 

\vspace{0.3cm}

Next, we'll bound the term 
\[\bE \left[ \left | \int_{D} (\kappa_1-\kappa_2 )\bI [f_1,f_2 <0] d\text{vol} \right |\right]. \]

We split the computation into two cases: $||\nabla f_i||< \delta $ for one of the $i=1,2$ and $||\nabla f_i||> \delta $ for both $i$'s (for some fixed $\delta>0$).

Now, 
\begin{equation} \label{eq:curv-diff}
    \begin{split}
 \bE \left[ \left | \int_{D} (\kappa_1-\kappa_2 )\bI [||\nabla f_1|| < \delta] d\text{vol} \right |\right] & \leq \int_{D} \bE \left [ |\kappa_1-\kappa_2| \bI [||\nabla f_1|| < \delta ] \right ] d\text{vol} \\
  & \leq (\bE|\kappa_1-\kappa_2|^{4/3})^{3/4} \int_D \bP (||\nabla f_1||^2 < \delta ^2 )^{1/4} d\text{vol} \\
  & \leq C_3 \CL^d(D) \sqrt{\delta}.
    \end{split}
\end{equation}

In the second inequality, we used the fact that curvature has $1+\alpha$ moments for $\alpha \in ([0,1))$ and applied H\"{o}lder's inequality. 
 Observe that $||\nabla f_1||^2$ has bounded pdf around zero, so  $\bP (||\nabla f_1||^2 < \delta ^2 ) = O(\delta^2)$.

Define \[\beta := ||f_1-f_2||_{C^2(D)}. \]
We exploit explicit representation of the curvature \eqref{eq: curvature} in terms of derivatives of the field. Given that $||\nabla f_1||, ||\nabla f_2||> \delta$ we have, 
\[ |\kappa_1-\kappa_2| \leq \dfrac{1}{\delta ^3} (\beta p_1+ \beta^2 p_2 + \beta^3 p_3 )\]
where $p_i$'s are polynomials in the first two derivatives of $f_1$ of degree at most 2. Hence, 
\begin{equation}
    \begin{split}
    \bE \left | \int_{D} (\kappa_1- \kappa_2) \bI [||\nabla f_1||, ||\nabla f_2||> \delta] d\text{vol} \right | & \leq \delta^{-3} \int_D \bE[(\beta p_1+ \beta^2 p_2 + \beta^3 p_3 )] d\text{vol}.
    \end{split}
\end{equation}

Using Cauchy-Schwartz inequality and the fact that laws of the polynomials $p_i$s are translation invariant, we have the following estimate,
 \[ \bE \left | \int_{D} (\kappa_1- \kappa_2) \bI [||\nabla f_1||, ||\nabla f_2||> \delta] d\text{vol} \right | \leq \dfrac{C_4 \CL^d(D)}{\delta^3} (\sqrt{\bE \beta^2} +\sqrt{\bE \beta^4} + \sqrt{\bE \beta^6}). \]
 But by the moment estimates of $\beta$ given in \cite[A.11.1]{Nazarov-Sodin-components-appendix}, the $\bE \beta^2$ term dominates when the coupling of the fields $f_1,f_2$ close. So we have, 
  \begin{equation} \label{eq:curv-diff-2}
  \bE \left | \int_{D} (\kappa_1- \kappa_2) \bI [||\nabla f_1||, ||\nabla f_2||> \delta] d\text{vol} \right | \leq \dfrac{C_5 \CL^d(D)}{\delta^3} (\sqrt{\bE \beta^2}). 
  \end{equation}

\vspace{0.3cm}

\textbf{Boundary term}: We come to the boundary term of equation \eqref{eq: level_diff}. 

\begin{equation} \label{eq: bdry term}
\begin{split}
    \int_{\partial D} \left [ \left \langle \dfrac{\nabla f_1}{|\nabla f_1|}, \hat{\eta} \right \rangle  \bI_{f_1 \leq 0 } -  \left \langle \dfrac{\nabla f_2}{|\nabla f_2|}, \hat{\eta} \right \rangle  \bI_{f_2 \leq 0 }\right ] dS = 
     \int_{\partial D} \left [ \left \langle \dfrac{\nabla f_1}{|\nabla f_1|} - \dfrac{\nabla f_2}{|\nabla f_2|}, \hat{\eta} \right \rangle \bI[f_1,f_2<0] \right ] dS \\
     + \int_{\partial D} \left [  \left \langle \dfrac{\nabla f_1}{|\nabla f_1|}, \hat{\eta}\right \rangle \bI[f_1<0, f_2>0] \right] dS + \int_{\partial D} \left [  \left \langle \dfrac{\nabla f_2}{|\nabla f_2|}, \hat{\eta}\right \rangle \bI[f_2<0, f_1>0] \right] dS
    \end{split}
\end{equation}

The analysis of bounds of first term of RHS of equation \eqref{eq: bdry term} is similar to that of equation \eqref{eq:curv-diff}.
We get that, 
\begin{equation} \label{eq: bdry-bound}
\left | \bE \int_{\partial D} \left [ \left \langle \dfrac{\nabla f_1}{|\nabla f_1|} - \dfrac{\nabla f_2}{|\nabla f_2|}, \hat{\eta} \right \rangle \bI[f_1,f_2<0] \right ] dS \right | \leq C_6 \CL^{d-1}(\partial D) (\delta_1^2+ \bE \beta/\delta_1)  
\end{equation}
for $\delta_1 >0$. 

Now, second term of RHS is bounded by $C \cdot \CL^{d-1}(\partial D \cap \{f_1f_2 <0 \})$ since ${\nabla f_1}/{|\nabla f_1|}$ is unit vector.
By similar argument which lead to equation \eqref{eq: correlation-bound}, we have 
\begin{equation}
    \bE \CL^{d-1}(\partial D \cap \{f_1f_2 <0 \}) \leq C_8 \CL^{d-1}(\partial D) \sigma_D. 
\end{equation}
This is again dominated by the quantity of RHS of equation \eqref{eq: correlation-bound}. 

\vspace{0.3cm} 

\textbf{Analysis of the final bound}: We combine the bounds from \eqref{eq: correlation-bound}, \eqref{eq:curv-diff},\eqref{eq:curv-diff-2}, and \eqref{eq: bdry-bound}. 
Finally, we get 

\[ \bE|\CH^{d-1}(f_1^{-1}(0))-\CH^{d-1}(f_2^{-1}(0))| \leq C \CL^d(D) \left (\sigma_D^{1/3} + \sqrt{\delta}+ \dfrac{\sqrt{\bE\beta^2}}{\delta^3} + \dfrac{\delta_1^2}{R}+ \dfrac{\bE\beta}{\delta_1 R} \right ).\]

Estimates from \cite[A.9, A.11.1]{Nazarov-Sodin-components-appendix} gives us $\bE \beta \leq C_1(R) \sigma_D $ and $\sqrt{\bE \beta^2} \leq C_2(R) \sigma_D$, where we can show that $C_1(R), C_2(R)$ behave like $\sqrt{\log R}$. 
Choosing $\delta= \sigma_D^{2/7}, \delta_1=\sigma_D^{1/2}$, and assuming $\sigma_D$ is small enough we have, 

\[
\bE|\CH^{d-1}(f_1^{-1}(0))-\CH^{d-1}(f_2^{-1}(0))| \leq C(R)\CL^d(D)\sigma_D^{1/7}.
\]

\end{proof}

\subsection{Technical bits} \label{technical-bits}

\textbf{Moments of curvature r.v.}: We show that the $(1+\alpha)$-moments are finite, where $0\leq \alpha <1 $ for the r.v. $\kappa$ of a $C^2$-smooth, non-degenerate, stationary field $f$. 
Observe that

\begin{equation} \label{eq: curvature}
    \kappa= \dfrac{|\nabla f|^2 \text{Tr}(H(f))- \nabla f H(f) \nabla f ^{\text{T}}}{|\nabla f |^{3}}
\end{equation}

where $H(f)$ is the Hessian of the function $f$, by a simple algebraic computation.

\vspace{0.3cm}

First let us prove that $\bE[|\kappa|^{1+\alpha}] < \infty$ for $d=2$ case. The general case follows from similar computation.
Observe that $\mathbf{x}=(x_1,x_2,x_3,x_4,x_5)=(\partial_x f,\partial_y f, \partial_{xx} f, \partial_{xy} f, \partial_{yy} f)$ is a Gaussian vector
and that $(\partial_x f,\partial_y f) \text{ and } (\partial_{xx} f, \partial_{xy} f, \partial_{yy} f) $ are independent, by stationarity of the field $f$. 
Let $\Sigma$ be the covariance matrix of the Gaussian vector $(\partial_x f,\partial_y f)$ and $\bP_1 $ be the law of $(\partial_{xx} f, \partial_{xy} f, \partial_{yy} f)$. Let $\mathbf{x}=(x_1,x_2)$ and $\mathbf{x}'=(x_3,x_4,x_5)$.

\vspace{0.3cm}

So, 
\begin{equation*}
    \begin{split}
        \bE[|\kappa|^{1+\alpha}] = & \dfrac{1}{\sqrt{\text{det}(2 \pi \Sigma)}} \times \\
       & \int_{\bR^5} \left | \dfrac{x_2^2 x_3 -2x_1 x_2 x_4+ x_1^2x_5}{(x_1^2+x_2^2)^{3/2}} \right |^{1+\alpha} \exp{(-1/2(\mathbf{x}^T \Sigma^{-1} \mathbf{x}))} d\mathbf{x} d \bP_1(\mathbf{x}').
    \end{split}
\end{equation*}

By changing the variables to $x_1= r \cos{\theta}, x_2= r \sin{\theta}$ and keeping other variables same, we get ,

\begin{equation*}
    \begin{split}
        \bE[|\kappa|^{1+\alpha}] = & \dfrac{1}{\sqrt{\text{det}(2 \pi \Sigma)}} \times \\ 
        & \int_I r^{-\alpha} \left |  \sin^2{\theta} x_3 -\sin(2 \theta) x_4+ \cos^2 \theta x_5 \right |^{1+\alpha} \exp (-1/2(\mathbf{\Tilde{x}}^T \Sigma^{-1} \mathbf{\Tilde{x}})) dr d\theta d\bP_1(\mathbf{x}')
    \end{split}
\end{equation*}

where $\mathbf{\Tilde{x}}= (r\cos \theta, r \sin \theta)$ and $I = [0, \infty]\times [0, 2 \pi] \times \bR^3$. 
Now, for $0\leq \alpha <1$ the above integral converges. Near the origin of $I$ convergence is taken care by  $\int_0^1 r^{-\alpha} dr < \infty$  and  away from origin  $\exp (\cdots)$ dominates. 
The result follows from the fact that the vector $(\partial_{xx} f, \partial_{xy} f, \partial_{yy} f)$ has all moments finite.

\vspace{0.3cm}

\textbf{Integrability of curvature function}: Consider a deterministic $C^2$-Morse function $f$ on a compact domain $D \subset \bR^d$. As above, at every $x \in D$ which is a regular point of $f$, define $\kappa$ to be the divergence of unit normal of $f$. 

We prove that 
\[ \int_D |\kappa| d \text{vol} < \infty. \]
 Note that except at critical points of $f$, $\kappa$ is continuous. 
So just need to show that $\int_{B_r(x_0)} |\kappa| d \text{vol} < \infty$ for a critical point $x_0$ of $f$ and a small enough ball $B_r(x_0)$ around $x_0$.

\vspace{0.3cm}

We have $\nabla f(x)= H(f)|_{x_0}(x-x_0) + O (||x-x_0||^2)$, by Taylor's series. Since $f$ is Morse, we can invert $H(f)|_{x_0}$ to have 

\[ ||\nabla f(x)|| \geq C \dfrac{||x-x_0||}{||H(f)_{x_0}^{-1}||}. \]

Since $\partial_{xx} f, \partial_{xy} f, \partial_{yy} f$ are all bounded on $D$ and $|\partial_x f(x)| \leq c_1 ||x-x_0||$, $|\partial_y f(x)| \leq c_2 ||x-x_0||$ near $x_0$ and again, exploiting the equation \eqref{eq: curvature}, we have 

\[\int_{B_r(x_0)} |\kappa| d \text{vol} \leq \Tilde{C} \int_{B_r(x_0)} \dfrac{1}{||x-x_0||} d \text{vol}. \]

But we have 
\[ \int_{B_r(x_0)} \dfrac{1}{||x-x_0||} d \text{vol} < \infty\]

for any $d \geq 2$.
This completes the proof that the curvature function is integrable on $D$.

\vspace{0.3cm}

\textbf{Product of Gaussian random variables}: If $X,Y$ are standard Gaussians of correlation $\rho$, then the density of the product $Z=XY$ is given by
\begin{equation}
\label{psi}
\psi_Z(z)=\frac{1}{\pi\sqrt{1-\rho^2}}\exp\left(\frac{\rho z}{1-\rho^2}\right)K_0\left(\frac{|z|}{1-\rho^2}\right)
\end{equation}
where $K_0$ is the Bessel function of the second kind. Integrating $\psi$ over $\mathbb{R}^+$,
\begin{equation} \label{eq: Guassian-product}
\bP(Z>0)=\frac{\pi-\arccos(\rho)}{\pi}
\end{equation}
i.e. this probability is close to $1$ or $0$ for $\rho$ close to $\pm 1$.

\bibliographystyle{amsplain}
\bibliography{main}

\end{document}